\documentclass[12pt,a4paper,oneside,reqno]{amsart}

\usepackage{lmodern}
\usepackage[margin=2.5cm]{geometry}
\usepackage{setspace}
\usepackage{amssymb}
\usepackage{color}
\usepackage[normalem]{ulem} 
\usepackage{mathtools}
\usepackage{lineno}
\usepackage{hyperref}

\author{Gregory J. Galloway}
\address{Department of Mathematics, University of Miami, Coral Gables, FL, USA.}
\email{galloway@math.miami.edu}

\author{Abraão Mendes}
\address{Instituto de Matemática, Universidade Federal de Alagoas, Maceió, AL, Brazil.}
\email{abraao.mendes@im.ufal.br}

\title[Some Rigidity Results For Charged Initial Data Sets]{Some Rigidity Results For Charged Initial Data Sets}

\newtheorem{thm}{Theorem}[section]
\newtheorem{prop}[thm]{Proposition}
\newtheorem{lemma}[thm]{Lemma}

\theoremstyle{remark}

\DeclareMathOperator{\Ric}{Ric}

\DeclareMathOperator{\id}{id}

\DeclareMathOperator{\tr}{tr}
\DeclareMathOperator{\divergence}{div}

\newcommand{\A}{\mathcal{A}}

\newcommand{\R}{\mathbb{R}}

\newcommand{\p}{\partial}

\renewcommand{\div}{\divergence}

\newcommand{\q}{\mathfrak{q}}

\newcounter{mnotecount}

\setcounter{equation}{0}

\begin{document}

\raggedbottom

\numberwithin{equation}{section}

\setstretch{1.2}

\begin{abstract}
In this note, we consider some initial data rigidity results concerning \linebreak marginally outer trapped surfaces (MOTS). As is well known, MOTS play an important role in the theory of black holes and, at the same time, are interesting spacetime analogues of minimal surfaces in Riemannian geometry. The main results presented here expand upon earlier works by the authors, specifically addressing initial data sets incorporating charge.
\end{abstract}

\maketitle

\section{Introduction}

Over the years, it has become clear that \textit{marginally outer trapped surfaces} (MOTS for short) are a fundamental concept in the realm of general relativity, particularly in the study of black holes and gravitational collapse. These surfaces serve as crucial tools for understanding the complex nature of spacetime in regions of intense gravitational fields.

From a mathematical perspective, the importance of the theory of MOTS can be demonstrated through, for example, their role in proving positive mass theorems (see \textit{e.g.} \cite{EicHuaLeeSch,GalLee,LeeLesUng2022,LeeLesUng}). Positive mass results are fundamental in general relativity, as they assert that the total mass of an isolated gravitational system is nonnegative, reflecting the physical reality that energy cannot be negative.

In this article, we aim to advance our understanding of the geometry of these crucial objects (the MOTS) and, perhaps, shed some light on the intricate nature of spacetime dynamics, black holes, and gravitational collapse.

In \cite{GalMen2018}, motivated in part by a result due to H.~Bray, S.~Brendle, and A.~Neves~\cite{BraBreNev}, the authors considered the case of a spherical MOTS in a $3$-dimensional initial data set and proved the rigidity result that we paraphrase as follows (definitions are given in Section~\ref{sec:pre}):

\begin{thm}[{\cite[Theorem~3.2]{GalMen2018}}]\label{thm.GalMen2018}
Let $(M,g,K)$ be a $3$-dimensional initial data set.\linebreak Assume that $(M,g,K)$ satisfies the energy condition, $\mu-|J|\ge c$, for some constant $c>0$. If $\Sigma$ is a closed weakly outermost and outer area-minimizing MOTS in $(M,g,K)$, then $\Sigma$ is topologically $S^2$ and its area satisfies
\begin{align*}
|\Sigma|\le\frac{4\pi}{c}.
\end{align*}
Furthermore, if equality holds,
then there exists an outer neighborhood $U\cong[0,\delta)\times\Sigma$ of $\Sigma$ in $M$ such that:
\begin{enumerate}
\item[\rm (a)] $(U,g)$ is isometric to $([0,\delta)\times\Sigma,dt^2+g_0)$, for some $\delta>0$, where $g_0$ - the induced metric on $\Sigma$ - has constant Gaussian curvature $\kappa=c$;
\item[\rm (b)] $K=fdt^2$ on $U$, where $f\in C^\infty(U)$ depends only on $t\in[0,\delta)$;
\item[\rm (c)] $\mu=c$ and $J=0$ on $U$.
\end{enumerate}
\end{thm}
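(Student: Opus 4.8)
My plan is to split the argument into an area/topology part, handled by the stability inequality for MOTS together with Gauss--Bonnet, and a rigidity part, handled by a foliation of an outer neighborhood of $\Sigma$ by surfaces of constant null expansion $\theta^+$ along which the infinitesimal rigidity is propagated. For the first part, recall that since $\Sigma$ is weakly outermost its MOTS stability operator
\[
L\phi=-\Delta_\Sigma\phi+2\langle X,\nabla\phi\rangle+\Big(\tfrac12 R_\Sigma-(\mu+J(\nu))-\tfrac12|\chi|^2+\div X-|X|^2\Big)\phi,
\]
where $X$ is the vector field dual to $K(\nu,\cdot)|_{T\Sigma}$ and $\chi$ is the null second fundamental form, has real principal eigenvalue $\lambda_1(L)\ge 0$ with a positive eigenfunction $\phi_0=e^f$. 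Substituting $\phi=\phi_0\psi$ and completing the square yields the symmetrized inequality $\int_\Sigma(|\nabla u|^2+Q\,u^2)\ge\lambda_1(L)\int_\Sigma u^2\ge 0$ for all $u$, with $Q=\tfrac12 R_\Sigma-(\mu+J(\nu))-\tfrac12|\chi|^2$. Taking $u\equiv 1$, using $\mu+J(\nu)\ge\mu-|J|\ge c$ and $|\chi|^2\ge 0$, and invoking Gauss--Bonnet gives $2\pi\chi(\Sigma)=\int_\Sigma\tfrac12 R_\Sigma\ge c|\Sigma|>0$; hence $\chi(\Sigma)=2$, so $\Sigma\cong S^2$, and $|\Sigma|\le 4\pi/c$.

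Suppose now $|\Sigma|=4\pi/c$, so all of the above inequalities are equalities. Reading them backwards, the identity $\int_\Sigma Q=\int_\Sigma|\nabla f-X|^2+\lambda_1(L)|\Sigma|$ forces $\lambda_1(L)=0$ and $X=\nabla f$; then $\int_\Sigma(|\nabla u|^2+Qu^2)=\int_\Sigma|\nabla u|^2$ for all $u$, so $Q\equiv 0$, and from $(\mu+J(\nu))+\tfrac12|\chi|^2\equiv c$ one gets $\chi\equiv 0$, $\mu-|J|=c$ with $J(\nu)=-|J|$, and $K_\Sigma=\tfrac12 R_\Sigma=c$. Since $0$ is the principal eigenvalue of $L$ with positive eigenfunction $\phi_0$, a by now standard application of the implicit function theorem (solving $\theta^+=\mathrm{const}$ for normal graphs over $\Sigma$, working modulo the one-dimensional kernel of the adjoint $L^\ast$) produces, for small $t$, a foliation $\{\Sigma_t\}_{t\in[0,\delta)}$ of an outer neighborhood of $\Sigma$ with $\Sigma_0=\Sigma$, positive lapse $\rho_t$, and $\theta^+(\Sigma_t)=\ell(t)$ constant on each leaf, $\ell(0)=0$.

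The main obstacle is to show $\ell\equiv 0$. One inequality is immediate: since $\Sigma$ is weakly outermost and each $\Sigma_t$ (for $t\ge 0$) lies in its exterior, no leaf can be outer trapped, so $\ell(t)\ge 0$. For the reverse inequality I would exploit that $\theta^+$ is constant on each leaf, so the variation of $\theta^+$ along the foliation collapses to $\ell'(t)=L_t\rho_t+\ell(t)\,\Phi_t\,\rho_t$ for a smooth function $\Phi_t$ on $\Sigma_t$, together with the outer area-minimizing hypothesis, which forces $|\Sigma_t|\ge|\Sigma|=4\pi/c$ for all $t$; integrating this identity over $\Sigma_t$, applying Gauss--Bonnet to $\Sigma_t$ and the energy condition, and feeding in $\ell(0)=0$ should yield $\ell\equiv 0$. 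I expect this to be the genuinely delicate step; the rest is bookkeeping with the constraint equations and the standard first- and second-variation formulas.

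Once $\ell\equiv 0$, every leaf is a MOTS with $L_t\rho_t=0$, hence $\lambda_1(L_t)=0$, so the stability inequality on $\Sigma_t$ gives $|\Sigma_t|\le 4\pi/c$; combined with area-minimization this forces $|\Sigma_t|\equiv 4\pi/c$, and therefore the infinitesimal rigidity ($\chi_t\equiv 0$, $\mu-|J|=c$, $K_{\Sigma_t}=c$) holds on every leaf. Constancy of $|\Sigma_t|$ gives $\int_{\Sigma_t}H_t\rho_t=0$, while outer area-minimality of $\Sigma_t$ gives $H_t\ge 0$, so in fact $H_t\equiv 0$ and each leaf is a stable minimal surface. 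Inserting $u\equiv 1$ into the minimal-surface stability inequality, rewriting $\Ric_M(\nu,\nu)$ via the Gauss equation and $R_M=2\mu+|K|^2-(\tr_M K)^2$, and using $\chi_t=0$ (hence $K|_{T\Sigma_t}=-A_t$ and $\tr_{\Sigma_t}K=0$) together with $\mu\ge c$, I obtain $\int_{\Sigma_t}\big(|A_t|^2+|X_t|^2\big)\le 0$. Thus $A_t\equiv 0$ (the leaves are totally geodesic, so $g_t\equiv g_0$), $X_t\equiv 0$ (so $\rho_t$, being the principal eigenfunction, is constant on each leaf, and after reparametrizing $t$ the metric becomes $dt^2+g_0$), $K$ restricted tangentially and mixed to the leaves vanishes (so $K=f\,dt^2$ with $f=K(\nu,\nu)$), and $\mu\equiv c$, $J\equiv 0$. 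Finally, the momentum constraint $J=\div\big(K-(\tr_M K)g\big)=0$ in this product form forces the tangential gradient of $f$ to vanish, i.e.\ $f=f(t)$, and $K_{\Sigma_0}=c$ yields the Gaussian curvature statement in (a). This establishes (a)--(c).
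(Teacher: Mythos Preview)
Your proposal is correct and follows essentially the same route as the paper: infinitesimal rigidity via the principal eigenfunction and Gauss--Bonnet, a constant-$\theta^+$ foliation from $\lambda_1=0$, and then the combination of the integrated variation formula, the energy condition, area-minimality, and weak outermostness to force $\theta^+\equiv 0$ leafwise, after which the equality cases yield (a)--(c). The step you flag as ``genuinely delicate'' is handled in the paper's proof of the charged generalization (Theorem~\ref{thm.local.splitting}) by deriving the Gronwall-type inequality $\big(\theta\,e^{-\int_0^t\alpha}\big)'\le 0$ from $\theta'\!\int_{\Sigma_t}\!\phi^{-1}-\theta\!\int_{\Sigma_t}\!\tau\le 4\pi - c|\Sigma_t|\le 0$; the final computations, which the paper defers to \cite{GalMen2018}, you carry out explicitly via the Riemannian minimal-surface stability inequality, and your bookkeeping there is sound.
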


Above, $\mu$ and $J$ are the \textit{local energy density} and the \textit{local current density} of $(M,g,K)$, respectively, defined in terms of the spacetime Einstein tensor $G$ by $\mu=G(u,u)$ and $J=G(u,\cdot)|_M$, where $u$ is the future directed timelike unit normal to $M$. The Gauss-Codazzi equations give that $\mu$ and $J$ are initial data quantities.

More recently, the authors obtained a global version of Theorem~\ref{thm.GalMen2018}, which, in particular, does not require the weakly outermost condition; see Theorem~\ref{thm.GalMen2024} below. This was inspired in part by their joint work with M.~Eichmair~\cite{EicGalMen}, where, among other things, they obtained a global version of the local rigidity result in~\cite{Gal2018} in connection with J.~Lohkamp's approach to the spacetime positive mass theorem in~\cite{Loh}.

\begin{thm}[{\cite[Theorem~3.1]{GalMen2024}}]
\label{thm.GalMen2024}
Let $(M,g,K)$ be a $3$-dimensional compact-with-\linebreak boundary initial data set. Assume that $(M,g,K)$ satisfies the energy condition, $\mu-|J|\ge c$, for some constant $c>0$. Assume also that the boundary of $M$ can be expressed as a \linebreak disjoint union $\p M=\Sigma_0\cup S$ of nonempty unions of components such that the following conditions hold:
\begin{enumerate}
\item $\theta^+\le0$ on $\Sigma_0$ with respect to the normal that points into $M$;
\item $\theta^+\ge0$ on $S$ with respect to the normal that points out of $M$;
\item $M$ satisfies the homotopy condition with respect to $\Sigma_0$;
\item the relative homology group $H_2(M,\Sigma_0)$ vanishes;
\item $\Sigma_0$ minimizes area. 
\end{enumerate}
Then $\Sigma_0$ is topologically $S^2$ and its area satisfies
\begin{align*}
\A(\Sigma_0)\le\frac{4\pi}{c}.
\end{align*}
Furthermore, if equality holds, then
\begin{enumerate}
\item[\rm (a)] $(M,g)$ is isometric to $([0,\ell]\times\Sigma_0,dt^2+g_0)$, for some $\ell>0$, where $g_0$ - the induced metric on $\Sigma_0$ - has constant Gaussian curvature $\kappa=c$;
\item[\rm (b)] $K=fdt^2$ on $M$, where $f\in C^\infty(M)$ depends only on $t\in[0,\ell]$;
\item[\rm (c)] $\mu=c$ and $J=0$ on $M$.
\end{enumerate}
\end{thm}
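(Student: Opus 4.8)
The most economical route is to verify the hypotheses of Theorem~\ref{thm.GalMen2018} for $\Sigma_0$ and invoke it for the area bound, the topology, and the \emph{local} rigidity, and then to use the extra topological hypotheses (3)--(4) to propagate that local rigidity over all of $M$. Recall how the local result is proved: one shows $\Sigma_0$ is a \emph{stable} MOTS, i.e. the principal eigenvalue $\lambda_1(L)\ge 0$ of the MOTS stability operator $L$, applies the Galloway--Schoen symmetrization $\lambda_1(-\Delta+Q)\ge\lambda_1(L)\ge 0$ with $Q=\tfrac12 R_{\Sigma_0}-(\mu+J(\nu))-\tfrac12|\chi^+|^2$ ($\nu$ the outward unit normal, $\chi^+$ the outer null second fundamental form), tests the Rayleigh quotient at $\phi\equiv 1$, and combines this with Gauss--Bonnet, the pointwise bounds $J(\nu)\ge-|J|$, $|\chi^+|^2\ge 0$, and the energy condition to obtain
\[
2\pi\chi(\Sigma_0)=\int_{\Sigma_0}\tfrac12 R_{\Sigma_0}\ \ge\ \int_{\Sigma_0}\!\big((\mu+J(\nu))+\tfrac12|\chi^+|^2\big)\ \ge\ \int_{\Sigma_0}(\mu-|J|)\ \ge\ c\,\A(\Sigma_0)>0,
\]
whence $\Sigma_0\cong S^2$ and $\A(\Sigma_0)\le 4\pi/c$; in the equality case every inequality is saturated, which pins down the induced metric ($g_0$ has constant Gaussian curvature $c$) and forces $\chi^+\equiv 0$, $J$ normal, $\mu-|J|\equiv c$, $\lambda_1(L)=0$, and then, via a Lyapunov--Schmidt construction of a MOTS foliation of an outer neighborhood of $\Sigma_0$ (using that $0$ is the principal eigenvalue of $L$, with positive eigenfunctions of $L$ and $L^*$ spanning kernel and cokernel) together with a maximum-principle/monotonicity argument as in \cite{BraBreNev,GalMen2018}, yields conclusions (a)--(c) on a neighborhood $U\cong[0,\delta)\times\Sigma_0$.

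The first genuinely new point is therefore to verify, from hypotheses (1), (2), (5) and the topological conditions, that $\Sigma_0$ is weakly outermost and outer area-minimizing, so that Theorem~\ref{thm.GalMen2018} applies. For this I would show that $\Sigma_0$ admits no strictly outer trapped surface lying strictly outside it in $M$ and homologous to it: if such a $\Sigma'$ existed, one could use $\Sigma'$ as an inner barrier and $S$ (where $\theta^+\ge 0$) as an outer barrier and apply the existence theory for MOTS of Andersson--Metzger and Eichmair to produce a weakly outermost (hence stable) MOTS $\hat\Sigma$ lying outside $\Sigma_0$ and homologous to it; a comparison of areas using that $\Sigma_0$ minimizes area among surfaces homologous to it then yields a contradiction, the hypothesis $H_2(M,\Sigma_0)=0$ being used to control which surfaces are homologous to $\Sigma_0$ and how they separate $M$. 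Granting this, Theorem~\ref{thm.GalMen2018} delivers $\Sigma_0\cong S^2$, $\A(\Sigma_0)\le 4\pi/c$, and, under equality, the product neighborhood $U$ with (a)--(c) on $U$.

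It remains to globalize. Let $\ell^*$ be the supremum of those $\ell>0$ for which $[0,\ell]\times\Sigma_0$ embeds isometrically in $(M,g)$ as $([0,\ell]\times\Sigma_0,dt^2+g_0)$ with $K=f\,dt^2$, $f=f(t)$, and with $\{\ell\}\times\Sigma_0$ an outer area-minimizing, weakly outermost MOTS; the local analysis gives $\ell^*>0$. Openness of the set of admissible $\ell$ at $\ell^*$ is Theorem~\ref{thm.GalMen2018} applied at the leaf $\Sigma_{\ell^*}$; closedness follows because a sequential limit of such leaves is again a stable MOTS of area $\A(\Sigma_0)=4\pi/c$ (by the product structure beneath it), which is outer area-minimizing and weakly outermost (these properties being inherited from $\Sigma_0$ through the product region, using $H_2(M,\Sigma_0)=0$). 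Since $M$ is compact the process terminates, and conditions (3)--(4) are precisely what guarantees that throughout the leaves remain connected, embedded and homologous to $\Sigma_0$, that no topology change occurs, and that the foliation cannot stop before its leaves reach $\partial M\setminus\Sigma_0=S$. Hence $M\cong([0,\ell^*]\times\Sigma_0,dt^2+g_0)$ with (a)--(c) holding on all of $M$, conclusion (c) following from (a)--(b) through the Gauss--Codazzi equations.

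I expect the two delicate points to be: (i) establishing that $\Sigma_0$ is weakly outermost — the interplay between the MOTS existence theory, the barrier conditions on $\Sigma_0$ and $S$, and the area comparison is not entirely routine; and (ii) the global step, where the homotopy condition and $H_2(M,\Sigma_0)=0$ must be used carefully to rule out degeneration of the MOTS foliation (topology change, loss of embeddedness, or premature termination) and to conclude that the product region exhausts $M$ up to $S$.
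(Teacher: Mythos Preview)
This theorem is quoted from \cite{GalMen2024} and is not proved in the present paper; the paper instead proves the charged generalisation, Theorem~\ref{thm.global.splitting}, whose proof specialises (take $E\equiv 0$, $\Lambda=c$) to a proof of the statement in question. Comparing against that argument, your proposal has a genuine structural gap.

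You try to establish first that $\Sigma_0$ is weakly outermost and then invoke Theorem~\ref{thm.GalMen2018} for the area bound. But your proposed contradiction is not one: if some $\Sigma'$ with $\theta^+<0$ exists and you produce a stable MOTS $\hat\Sigma$ between $\Sigma'$ and $S$, all you obtain is $|\Sigma_0|\le|\hat\Sigma|\le 4\pi/c$, which is precisely the area bound, not a contradiction. In fact there is no reason to expect $\Sigma_0$ to be weakly outermost when $|\Sigma_0|<4\pi/c$; that conclusion is only available in the equality case. Relatedly, Theorem~\ref{thm.GalMen2018} requires $\Sigma_0$ to already be a MOTS, while the hypotheses only give $\theta^+\le 0$.

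The paper's Lemma~\ref{main.lemma} (specialised to $E=0$) reverses the order. First the area bound: using $\Sigma_0$ and $S$ as barriers in the existence theory one finds a stable MOTS $\hat\Sigma$ homologous to $\Sigma_0$, whence $|\Sigma_0|\le|\hat\Sigma|\le 4\pi/c$ by area minimisation and Proposition~\ref{prop.inf.rigidity}. Second, under the \emph{assumption} $|\Sigma_0|=4\pi/c$, one shows $\Sigma_0$ is a weakly outermost MOTS. Here the paper's device is to pass to the initial data $(W,g,-K)$ on the region $W$ bounded by a strictly outer trapped perturbation $\Sigma$ (obtained via \cite[Lemma~5.2]{AndMet} if $\theta^+\not\equiv 0$) and $S$: the barrier signs flip, and one obtains an \emph{outermost} MOTS $\Sigma'$ for $-K$, with ``outer'' pointing toward $\Sigma$. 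Area minimisation forces $|\Sigma'|=4\pi/c$, so the Local Splitting applied to $\Sigma'$ produces a MOTS foliation on its outer side, contradicting outermost. The same device rules out any strictly outer trapped surface homologous to $\Sigma_0$. Only then does the local splitting apply to $\Sigma_0$ itself.

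Your globalisation sketch is close to the paper's continuation argument (replace $\Sigma_0$ by the limit leaf $\Sigma_\delta$, which inherits $|\Sigma_\delta|=|\Sigma_0|$ and area minimisation, and iterate; terminate via the strong maximum principle when the leaf meets $S$). The topology of $\Sigma_0$ is handled in the paper not via Theorem~\ref{thm.GalMen2018} but via \cite[Theorem~1.2]{EicGalMen}: if $\Sigma_0$ were not a sphere it would satisfy the cohomology condition and force $\mu-|J|=0$, contradicting $\mu-|J|\ge c>0$.
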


The aim of this paper is to extend Theorems~\ref{thm.GalMen2018} and~\ref{thm.GalMen2024} to spherical MOTS in\linebreak 3-dimensional charged initial data sets, that is, initial data for the Einstein-Maxwell equations with vanishing magnetic field. See \cite{BatLimSou} for some results related to Theorem~\ref{thm.GalMen2018}.

The paper is organized as follows: in Section~\ref{sec:pre}, we present some basic definitions; in Section~\ref{sec:local}, we prove an infinitesimal rigidity result (Proposition~\ref{prop.inf.rigidity}) for closed stable spherical MOTS in $3$-dimensional charged initial data sets and use this result to obtain an extension of Theorem~\ref{thm.GalMen2018}; this is the content of Theorem~\ref{thm.local.splitting}. Finally, in Section~\ref{sec:global}, we use the results obtained in the previous section to extend Theorem~\ref{thm.GalMen2024} to the context of MOTS in $3$-dimensional compact-with-boundary initial data sets for the Einstein-Maxwell equations with vanishing magnetic field; see Theorem~\ref{thm.global.splitting}. 

\medskip
\paragraph{\bf{Acknowledgements.}} The work of the first named author was partially supported by the Simons Foundation, under Award
No. 850541. The work of the second author was partially supported by the Conselho Nacional de Desenvolvimento Científico e Tecnológico - CNPq, Brazil (Grants 309867/2023-1 and 405468/2021-0) and the Fundação de Amparo à Pesquisa do Estado de Alagoas - FAPEAL, Brazil (Process E:60030.0000002254/2022).

\section{Preliminaries}\label{sec:pre}

We first discuss, based on natural physical considerations, the energy condition relevant to general non-time symmetric initial data sets that allows for the presence of an electric field, but not a magnetic field.

Let $(M,g,K)$ be an initial data set in a spacetime $(\bar{M},\bar{g}$), \textit{i.e.} $M$ is a spacelike hypersurface with induced metric $g$ and second fundamental form $K$ with respect to the future directed timelike unit normal to $M$. Assume $(\bar{M},\bar{g})$ satisfies the Einstein equation,
\begin{align*}
G + \Lambda \bar{g} = 8\pi (T_F + T),
\end{align*}
where $G$ is the Einstein tensor, $G = \Ric_{\bar{M}}-\frac{1}{2}R_{\bar{M}}\bar{g}$, $T_F$ is the electromagnetic energy-momentum tensor, and $T$ is the 
energy-momentum tensor associated to any nongravitational and nonelectromagnetic fields (\textit{e.g.} matter fields) that may be present. 

Let $u$ be the future directed timelike unit normal vector field along $M$. As is standard, by the Gauss-Codazzi equations, we have
\begin{align*}
\mu:= G(u,u) = \frac{1}{2}\left(R_{M}-|K|^2+\tau^2\right)\quad\mbox{and}\quad J := G(u,\cdot) 
=\div(K-\tau g) ,
\end{align*}
where $R_{M}$ is the scalar curvature of $M$ and $\tau=\tr K$ is the mean curvature of $M$ in $\bar{M}$ with respect to $u$.

Suppose that the fields associated to $T$ satisfy the \textit{dominant energy condition}, that is,
\begin{align*}
T(X,Y) \ge 0\quad\mbox{for all future directed causal vectors}\quad X,Y.
\end{align*}
Then, for all unit vectors $\nu$ that are tangent to $M$,
\begin{align*}
G(u,u+\nu) + \Lambda\bar{g}(u,u+\nu) = 8\pi(T_F(u,u + \nu) + T(u,u+\nu)),
\end{align*}
and hence,
\begin{align*}
\mu + J(\nu) = G(u,u) + G(u,\nu) \ge \Lambda + 8\pi(T_F(u,u) + T_F(u,\nu)).
\end{align*}

Now suppose that there is no magnetic field present, $B=0$. It then follows that $T_F(u,\nu)=0$. Physically, this corresponds to the vanishing of the Poynting vector in every Lorentz orthogonal frame. Moreover, $T_F(u,u)$ represents the electromagnetic energy density and is given by the expression $\frac{1}{8\pi}|E|^2$, where $E$ is the electric field. We thus arrive at the initial data inequality, 
\begin{align}\label{chargeDEC}
\mu+J(\nu)\ge\Lambda+|E|^2,
\end{align}
which holds at each point $p \in M$ and for each unit vector $\nu\in T_pM$. As this holds for all such vectors $\nu$, \eqref{chargeDEC} in turn implies, 
\begin{align*}
\mu-|J|\ge\Lambda+|E|^2,
\end{align*}
which we refer to as the \textit{charged dominant energy condition.} 
(Essentially the same energy condition is used in \cite{BatLimSou}, but is arrived at by a different rationale. \textit{N.B.} however, we note that the quantity $\mu$ is used in a different way.)

Now let $\Sigma$ be a closed embedded hypersurface in $M$.

In this paper, we assume that $M$ and $\Sigma$ are orientable; in particular, $\Sigma$ is two-sided. Then, we fix a unit normal vector field $\nu$ along $\Sigma$; if $\Sigma$ separates $M$, by convention, we say that $\nu$ points to the \textit{outside} of $\Sigma$. 

In the sequel, we are going to present some important definitions to our purposes.

The \textit{charge} of $\Sigma$ with respect to $\nu$ is defined by
\begin{align*}
\q(\Sigma)=\frac{1}{4\pi}\int_\Sigma\langle E,\nu\rangle.
\end{align*}

The \textit{null second fundamental forms} $\chi^+$ and $\chi^-$ of $\Sigma$ in $(M,g,K)$ are defined by
\begin{align*}
\chi^+=K|_\Sigma+A\quad\mbox{and}\quad\chi^-=K|_\Sigma-A,
\end{align*}
where $A$ is the second fundamental form of $\Sigma$ in $(M,g)$ with respect to $\nu$; more precisely,
\begin{align*}
A(X,Y)=g(\nabla_X\nu,Y)\quad\mbox{for}\quad X,Y\in\mathfrak{X}(\Sigma),
\end{align*}
where $\nabla$ is the Levi-Civita connection of $(M,g)$.

The \textit{null expansion scalars} or the \textit{null mean curvatures} $\theta^+$ and $\theta^-$ of $\Sigma$ in $(M,g,K)$ with respect to $\nu$ are defined by
\begin{align*}
\theta^+=\tr\chi^+=\tr_\Sigma K+H\quad\mbox{and}\quad\theta^-=\tr\chi^-=\tr_\Sigma K-H,
\end{align*}
where $H=\tr A$ is the mean curvature of $\Sigma$ in $(M,g)$ with respect to $\nu$. Observe that $\theta^\pm=\tr\chi^\pm$.

After R.~Penrose, $\Sigma$ is said to be \textit{trapped} if both $\theta^+$ and $\theta^-$ are negative. Restricting our attention to one side, we say that $\Sigma$ is \textit{outer trapped} if $\theta^+$ is negative and \textit{marginally outer trapped} if $\theta^+$ vanishes. In the latter case, we refer to $\Sigma$ as a \textit{marginally outer trapped surface} or a \textit{MOTS}, for short.

Assume that $\Sigma$ is a MOTS in $(M,g,K)$ with respect to a unit normal $\nu$ that is a boundary in $M$, \textit{i.e.} $\nu$ points towards a top-dimensional submanifold $M^+\subset M$ such that $\p M^+=\Sigma\sqcup S$, where $S$ (possibly $S=\varnothing$) is a union of components of $\p M$ (in particular, if $\Sigma$ separates $M$). We think of $M^+$ as the region outside of $\Sigma$. Then we say that 
$\Sigma$ is \textit{outermost} (resp. \textit{weakly outermost}) if there is no closed embedded hypersurface in $M^+$ with $\theta^+\le0$ (resp. $\theta^+<0$) that is homologous to and different from $\Sigma$.

We say that $\Sigma$ \textit{minimizes area} in $M$ if $\Sigma$ has the least area in its homology class in $M$, \textit{i.e.} $|\Sigma|\le|\Sigma'|$ for every closed embedded hypersurface $\Sigma'$ in $M$ that is homologous to $\Sigma$. Similarly, $\Sigma$ is said to be \textit{outer area-minimizing} if $\Sigma$ minimizes area in $M^+$.

An important notion that we are going to recall now is the notion of stability for MOTS.

Let $\Sigma$ be a MOTS in $(M,g,K)$ with respect to $\nu$ and $t\to\Sigma_t$ be a variation of $\Sigma=\Sigma_0$ in $M$ with variation vector field $\frac{\p}{\p t}|_{t=0}=\phi\nu$, for some $\phi\in C^\infty(\Sigma)$. Denote by $\theta^\pm(t)$ the null expansion scalars of $\Sigma_t$ with respect to the unit normal $\nu_t$, where $\nu=\nu_t|_{t=0}$. It is well known that
\begin{align*}
\frac{\p\theta^+}{\p t}\Big|_{t=0}=-\Delta\phi+2\langle X,\nabla\phi\rangle+(Q-|X|^2+\div X)\phi,
\end{align*}
where $\Delta$ and $\div$ are the Laplace and divergent operators of $\Sigma$ with respect to the induced metric $\langle\,,\,\rangle$, respectively; $X\in\mathfrak{X}(\Sigma)$ is the vector field that is dual to the 1-form $K(\nu,\cdot)|_\Sigma$, and 
\begin{align*}
Q=\frac{1}{2}R_\Sigma-(\mu+J(\nu))-\frac{1}{2}|\chi^+|^2.
\end{align*}
Here $R_\Sigma$ represents the scalar curvature of $\Sigma$.

The operator
\begin{align*}
L\phi=-\Delta\phi+2\langle X,\nabla\phi\rangle+(Q-|X|^2+\div X)\phi,\quad\phi\in C^\infty(\Sigma),
\end{align*}
is called the \textit{MOTS stability operator}. It can be proved that $L$ has a real eigenvalue $\lambda_1$, called the \textit{principal eigenvalue} of $L$, such that $\mbox{Re}\,\lambda\ge\lambda_1$ for any complex eigenvalue $\lambda$. Furthermore, the associated eigenfunction $\phi_1$, $L\phi_1=\lambda_1\phi_1$, is unique up to scale and can be chosen to be everywhere positive.

We say that $\Sigma$ is \textit{stable} if $\lambda_1\ge0$; this is equivalent to saying that $L\phi\ge0$ for some positive function $\phi\in C^\infty(\Sigma)$. It is not difficult to see that if $\Sigma$ is weakly outermost (in particular, if $\Sigma$ is outermost), then $\Sigma$ is stable.

Before finishing this section, we would like to point out that, in general (when $\Sigma$ is not necessarily a MOTS), the first variation of $\theta^+$ is given by
\begin{align*}
\frac{\p\theta^+}{\p t}\Big|_{t=0}=-\Delta\phi+2\langle X,\nabla\phi\rangle+\left(Q-|X|^2+\div X-\frac{1}{2}\theta^++\tau\theta^+\right)\phi.
\end{align*}

\section{Infinitesimal Rigidity and Local Splitting}\label{sec:local}

The aim of this section is to generalize the local splitting result, Theorem \ref{thm.GalMen2018} in the introduction, to initial data sets with charge.  The first step is to establish an {\it infinitesimal rigidity} result; \textit{cf.} \cite[Proposition~1.1]{BatLimSou}.

\begin{prop}[Infinitesimal Rigidity]\label{prop.inf.rigidity}
Let $(M,g,K,E)$ be a $3$-dimensional initial data set for the Einstein-Maxwell equations with vanishing magnetic field, $B=0$. Assume that $(M,g,K,E)$ satisfies the charged dominant energy condition, $\mu-|J|\ge\Lambda+|E|^2$, for some constant $\Lambda>0$. If $\Sigma$ is a closed stable MOTS in $(M,g,K)$, then $\Sigma$ is topologically $S^2$ and its charge and area satisfy
\begin{align}\label{eq.charge}
4\Lambda\q(\Sigma)^2\le1,
\end{align}
\begin{align}\label{eq.area}
2\pi\left(1-\sqrt{1-4\Lambda\q(\Sigma)^2}\right)\le\Lambda|\Sigma|\le2\pi\left(1+\sqrt{1-4\Lambda\q(\Sigma)^2}\right).
\end{align}
Furthermore, if equality holds in one of the above inequalities, then
\begin{enumerate}
\item[\rm (a)] $E=a\nu$ on $\Sigma$, for some constant $a$;
\item[\rm (b)] $\Sigma$ is isometric to the round $2$-sphere of Gaussian curvature $\kappa=\Lambda+a^2$;
\item[\rm (c)] the null second fundamental form $\chi^+$ of $\Sigma$ vanishes;
\item[\rm (d)] $\mu+J(\nu)=\mu-|J|=\Lambda+a^2$ on $\Sigma$ and, in particular, $J|_\Sigma=0$;
\item[\rm (e)] the principal eigenvalue $\lambda_1$ of $L$ equals zero.
\end{enumerate}
\end{prop}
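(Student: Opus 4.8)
The plan is to combine a symmetrized form of the MOTS stability inequality with a Cauchy--Schwarz estimate controlling $\int_\Sigma|E|^2$ by the charge and the area. Since $\Sigma$ is stable, I would take $\phi\in C^\infty(\Sigma)$ to be the positive principal eigenfunction, $L\phi=\lambda_1\phi$ with $\lambda_1\ge0$. Dividing the eigenvalue equation by $\phi$ and using $\Delta\phi/\phi=\Delta\ln\phi+|\nabla\ln\phi|^2$ produces the pointwise identity $Q=\lambda_1+\div Z+|Z|^2$ with $Z:=\nabla\ln\phi-X$; multiplying by $\psi^2$ and integrating by parts once then gives, for every $\psi\in C^\infty(\Sigma)$,
\[
\int_\Sigma\big(|\nabla\psi|^2+Q\psi^2\big)=\lambda_1\int_\Sigma\psi^2+\int_\Sigma|\psi Z-\nabla\psi|^2\ \ge\ 0 .
\]
Taking $\psi\equiv1$, recalling $Q=\tfrac12R_\Sigma-(\mu+J(\nu))-\tfrac12|\chi^+|^2$, and using the Gauss--Bonnet theorem together with the pointwise bounds $\mu+J(\nu)\ge\mu-|J|\ge\Lambda+|E|^2$ (the first from $J(\nu)\ge-|J|$, the second the charged dominant energy condition), I get
\[
2\pi\chi(\Sigma)=\int_\Sigma\tfrac12R_\Sigma\ \ge\ \int_\Sigma\big((\mu+J(\nu))+\tfrac12|\chi^+|^2\big)\ \ge\ \int_\Sigma(\mu-|J|)\ \ge\ \int_\Sigma(\Lambda+|E|^2)\ \ge\ \Lambda|\Sigma|\ >\ 0 .
\]
Since $\Sigma$ is closed and orientable, $\chi(\Sigma)>0$ forces $\Sigma\cong S^2$, so $\int_\Sigma\tfrac12R_\Sigma=4\pi$.

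Next I would bring in the charge. By Cauchy--Schwarz, $\big(4\pi\q(\Sigma)\big)^2=\big(\int_\Sigma\langle E,\nu\rangle\big)^2\le|\Sigma|\int_\Sigma|E|^2$, hence $\int_\Sigma|E|^2\ge 16\pi^2\q(\Sigma)^2/|\Sigma|$. Substituting this into the last chain of inequalities yields the master inequality $\Lambda|\Sigma|^2-4\pi|\Sigma|+16\pi^2\q(\Sigma)^2\le0$. Viewing the left-hand side as a quadratic in $|\Sigma|$, nonnegativity of its discriminant is precisely $4\Lambda\q(\Sigma)^2\le1$, which is \eqref{eq.charge}; and then $|\Sigma|$ must lie between the two roots $\tfrac{2\pi}{\Lambda}\big(1\mp\sqrt{1-4\Lambda\q(\Sigma)^2}\big)$, which is exactly \eqref{eq.area} after rearranging.

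For the rigidity part, suppose equality holds in \eqref{eq.charge} or in one of the two bounds in \eqref{eq.area}. In each of these three cases a short computation shows the master inequality becomes an equality, and hence \emph{every} inequality used above must be an equality; I would then read off (a)--(e) from this. The Cauchy--Schwarz equalities force $|E|$ to be constant, $E$ to be pointwise normal to $\Sigma$, and $\langle E,\nu\rangle$ to have one sign, so $E=a\nu$ on $\Sigma$ with $a$ constant --- this is (a). Equality in $\int_\Sigma|\chi^+|^2=0$ gives $\chi^+\equiv0$ --- (c). Pointwise equality in $(\mu+J(\nu))+\tfrac12|\chi^+|^2\ge\mu-|J|$ forces $J(\nu)=-|J|$ on $\Sigma$, whence the tangential part of $J$ along $\Sigma$ vanishes, i.e.\ $J|_\Sigma=0$; combined with $\mu-|J|=\Lambda+|E|^2$ this gives $\mu+J(\nu)=\mu-|J|=\Lambda+a^2$ on $\Sigma$ --- (d). Finally, equality with $\psi\equiv1$ in the symmetrized inequality means $\int_\Sigma Q=\lambda_1|\Sigma|+\int_\Sigma|Z|^2=0$, so $\lambda_1=0$ --- (e) --- and $Z\equiv0$; then the pointwise identity $Q=\lambda_1+\div Z+|Z|^2$ gives $Q\equiv0$, and since $\chi^+\equiv0$ this reads $\tfrac12R_\Sigma=\mu+J(\nu)=\Lambda+a^2$, so $\Sigma$ has constant Gaussian curvature $\Lambda+a^2>0$ and is therefore isometric to the round sphere of that curvature --- (b).

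The one genuinely substantial step is the first ingredient: converting the non-self-adjoint operator $L$ into the honest quadratic form in the first display, via the positive principal eigenfunction. This is, however, by now standard in the MOTS literature, and the computation is essentially that in the proof of Theorem~\ref{thm.GalMen2018}. The new element here is the Cauchy--Schwarz estimate for the charge and the resulting quadratic inequality in $|\Sigma|$; once it is in place, the rigidity conclusions follow by tracing equality back through the chain, the only mild subtlety being to extract from the Cauchy--Schwarz equality cases that $E=a\nu$ with $a$ a genuine constant.
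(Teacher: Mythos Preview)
Your proof is correct and follows essentially the same approach as the paper: the paper also divides the principal-eigenfunction equation by $\phi$ to obtain the pointwise identity $\lambda_1=\div Y-|Y|^2+Q$ with $Y=X-\nabla\ln\phi$ (your $-Z$), integrates to get $0\le\int_\Sigma Q$, applies Gauss--Bonnet and the same Cauchy--Schwarz estimate on the charge to reach the identical quadratic inequality in $|\Sigma|$, and then traces back equality through the chain. Your detour through the full symmetrized inequality with a test function $\psi$ is unnecessary here since you only use $\psi\equiv1$, but it is harmless and the argument is otherwise the paper's own.
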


Observe that, if equality in \eqref{eq.charge} holds, then $\Lambda|\Sigma|=2\pi$ and equalities in \eqref{eq.area} also hold.

\proof
Let $\phi>0$ be an eigenfunction associated to the principal eigenvalue $\lambda_1$ of $L$,
\begin{align*}
\lambda_1\phi=L\phi=-\Delta\phi+2\langle X,\nabla\phi\rangle+(Q-|X|^2+\div X)\phi.
\end{align*}
Thus, dividing by $\phi$, we get
\begin{align}\label{eq.aux5}
0\le\lambda_1=\div Y-|Y|^2+Q\le\div Y+Q,
\end{align}
where $Y=X-\nabla\ln\phi$. Therefore, integrating over $\Sigma$ and using the Divergence Theorem, we obtain
\begin{align}\label{eq.aux1}
0\le\int_\Sigma Q=\int_\Sigma\left(\kappa-(\mu+J(\nu))-\frac{1}{2}|\chi^+|^2\right)\le4\pi(1-\textbf{g}(\Sigma))-\int_\Sigma(\Lambda+|E|^2),
\end{align}
where above we have used the Gauss-Bonnet Theorem and inequalities
\begin{align}\label{eq.aux6}
\mu+J(\nu)\ge\mu-|J|\ge\Lambda+|E|^2.
\end{align}
Here $\textbf{g}(\Sigma)$ represents the genus of $\Sigma$. Inequalities in \eqref{eq.aux1} give that $\textbf{g}(\Sigma)=0$, since $\Lambda>0$.

On the other hand, 
\begin{align}\label{eq.aux2}
\q(\Sigma)^2=\left(\frac{1}{4\pi}\int_\Sigma\langle E,\nu\rangle\right)^2\le\frac{1}{(4\pi)^2}\left(\int_\Sigma|E|\right)^2\le\frac{|\Sigma|}{(4\pi)^2}\int_\Sigma|E|^2.
\end{align}
Then, using \eqref{eq.aux2} into \eqref{eq.aux1}, we obtain
\begin{align}\label{eq.aux9}
\Lambda|\Sigma|+\frac{(4\pi\q(\Sigma))^2}{|\Sigma|}\le4\pi(1-\textbf{g}(\Sigma))=4\pi,
\end{align}
that is,
\begin{align*}
\Lambda|\Sigma|^2-4\pi|\Sigma|+(4\pi\q(\Sigma))^2\le0.
\end{align*}
Completing the square in the last inequality, we get
\begin{align}\label{eq.aux4}
(\Lambda|\Sigma|-2\pi)^2\le(2\pi)^2(1-4\Lambda\q(\Sigma)^2).
\end{align}
This gives \eqref{eq.charge} and \eqref{eq.area}.

If equality in \eqref{eq.charge} or \eqref{eq.area} holds, then we have equality in \eqref{eq.aux4}, which gives that all above inequalities must be equalities. Therefore,
\begin{itemize}
\item equalities in \eqref{eq.aux5} give that $\lambda_1=0$, $Y=0$ and, in particular, $Q=0$;
\item equalities in \eqref{eq.aux1} and \eqref{eq.aux6} give that $\chi^+=0$;
\item equalities in \eqref{eq.aux2} give that $E$ is parallel to $\nu$ and $|E|$ is constant; thus $E=a\nu$, for some constant $a$ (here we have used that $\Sigma$ is connected);
\item equalities in \eqref{eq.aux6}, together with $E=a\nu$, give that $\mu+J(\nu)=\mu-|J|=\Lambda+a^2$. 
\end{itemize}
Finally,
\begin{align*}
0=Q=\kappa-(\mu+J(\nu))-\frac{1}{2}|\chi^+|^2=\kappa-(\Lambda+a^2),\quad\mbox{that is,}\quad\kappa=\Lambda+a^2.  
\end{align*}

\vspace{-.33in}
\qed

\medskip

We will now make use of  Infinitesimal Rigidity to prove the following Local Splitting.  A key feature of the latter result (in comparison with the $\Lambda > 0$ case of Theorem 1.2 in \cite{BatLimSou}) is that it does not assume a convexity condition on $K$, specifically that $K$ be $2$-convex; this is the requirement that the sum of the two smallest eigenvalues of $K$ be nonnegative.

\begin{thm}[Local Splitting]\label{thm.local.splitting}
Let $(M,g,K,E)$ be a $3$-dimensional initial data set for the Einstein-Maxwell equations with vanishing magnetic field, $B=0$. Assume that $(M,g,K,E)$ satisfies the charged dominant energy condition, 
$\mu-|J|\ge\Lambda+|E|^2$, for some constant $\Lambda>0$, and $\div E=0$. If $\Sigma$ is a closed weakly outermost and outer area-minimizing MOTS in $(M,g,K)$, then $\Sigma$ is topologically $S^2$ and its charge and area satisfy
\begin{align}\label{eq.charge2}
4\Lambda\q(\Sigma)^2\le1,
\end{align}
\begin{align}\label{eq.area2}
\Lambda|\Sigma|\le 2\pi\left(1+\sqrt{1-4\Lambda\q(\Sigma)^2}\right).
\end{align}
Furthermore, if equality holds in one of the above inequalities,
then there exists an outer neighborhood $U\cong[0,\delta)\times\Sigma$ of $\Sigma$ in $M$ such that
\begin{enumerate}
\item[\rm (a)] $E=a\nu_t$, for some constant $a$, where $\nu_t$ is the unit normal to $\Sigma_t\cong\{t\}\times\Sigma$ in direction of the foliation;
\item[\rm (b)] $(U,g)$ is isometric to $([0,\delta)\times\Sigma,dt^2+g_0)$, for some $\delta>0$, where $g_0$ - the induced metric on $\Sigma$ - has constant Gaussian curvature $\kappa=\Lambda+a^2$;
\item[\rm (c)] $K=fdt^2$ on $U$, where $f\in C^\infty(U)$ depends only on $t\in[0,\delta)$;
\item[\rm (d)] $\mu=\Lambda+a^2$ and $J=0$ on $U$.
\end{enumerate}
\end{thm}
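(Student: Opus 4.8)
The plan is to combine the Infinitesimal Rigidity result (Proposition~\ref{prop.inf.rigidity}) with the variational/foliation machinery already used in the proof of Theorem~\ref{thm.GalMen2018} in \cite{GalMen2018}. First, since a weakly outermost MOTS is stable, Proposition~\ref{prop.inf.rigidity} immediately gives that $\Sigma\cong S^2$ and that inequalities \eqref{eq.charge2} and \eqref{eq.area2} hold (the upper bound in \eqref{eq.area} being exactly \eqref{eq.area2}). So the substance is the rigidity case. Assume equality holds in \eqref{eq.charge2} or \eqref{eq.area2}; then all the conclusions (a)--(e) of Proposition~\ref{prop.inf.rigidity} hold on $\Sigma$, in particular $\lambda_1=0$, $\chi^+=0$, $E=a\nu$ with $a$ constant, and $\kappa=\Lambda+a^2$.

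Next I would construct the MOTS foliation. Because $\lambda_1(L)=0$ with positive eigenfunction $\phi_1$, the standard implicit-function-theorem argument (as in \cite{GalMen2018}, going back to Andersson--Metzger and Galloway) produces, for small $t$, surfaces $\Sigma_t$ written as normal graphs over $\Sigma$ with $\theta^+(\Sigma_t) = $ (a function of $t$ alone) $=: \phi(t)$, forming a foliation of an outer neighborhood $U\cong[0,\delta)\times\Sigma$; one also arranges that the $t=\mathrm{const}$ slices have the graph function proportional to $\phi_1>0$ to leading order, so the foliation moves strictly outward. The weakly outermost hypothesis forces $\theta^+(\Sigma_t)\ge 0$ for all $t$ (otherwise one would produce an outer surface with $\theta^+<0$ homologous to $\Sigma$). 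On the other hand, the outer area-minimizing hypothesis will be used exactly as in \cite{GalMen2018}: the area functional $t\mapsto |\Sigma_t|$ has $\Sigma_0=\Sigma$ as a minimum, so $|\Sigma_t|\ge|\Sigma|$; but one also shows $|\Sigma_t|\le|\Sigma|$ using $\theta^+\ge 0$ together with the charged DEC and Gauss--Bonnet — this is the crux.

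The key computation is the following. Let $\rho(t)$ be the positive lapse of the foliation ($\partial_t = \rho\,\nu_t$, with $\rho(0)=\phi_1$). Since $\theta^+(\Sigma_t)\ge 0$ and equals $\tr_{\Sigma_t}K + H_t$, and since along the foliation one has the evolution identity for $\theta^+$, integrating the stability-type inequality $\lambda_1(L_t)\le \text{(average of } Q_t - |X_t|^2 + \mathrm{div}\,X_t)$ against the area form and invoking Gauss--Bonnet $\int_{\Sigma_t}\kappa_t = 4\pi$, the charged DEC $\mu+J(\nu_t)\ge\Lambda+|E|^2$, the Cauchy--Schwarz bound $(4\pi\q)^2\le|\Sigma_t|\int|E|^2$ (note $\q(\Sigma_t)=\q(\Sigma)$ since $\div E=0$ and $\Sigma_t$ is homologous to $\Sigma$), yields exactly $\Lambda|\Sigma_t|^2 - 4\pi|\Sigma_t| + (4\pi\q)^2\le 0$ — i.e. $|\Sigma_t|$ lies between the two roots, so in the equality case $|\Sigma_t| \le |\Sigma| = $ that root. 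Combined with area-minimization, $|\Sigma_t|\equiv|\Sigma|$, forcing equality everywhere in the chain for every $t$. Then, exactly as in the proof of Proposition~\ref{prop.inf.rigidity} applied slice-by-slice: $\chi^+(\Sigma_t)=0$, $E=a\nu_t$ with the \emph{same} constant $a$ (by continuity and connectedness of $U$), $\kappa_t = \Lambda+a^2$, and $\mu+J(\nu_t)=\mu-|J|=\Lambda+a^2$. Feeding $\theta^+\ge 0$ back together with area being constant and first-variation-of-area $\frac{d}{dt}|\Sigma_t| = \int_{\Sigma_t}\rho H_t$ forces $H_t\equiv 0$, hence $\tr_{\Sigma_t}K = \theta^+ = 0$ and $A(\Sigma_t)=0$ for all $t$; so the foliation is totally geodesic with flat normal direction, giving $g = dt^2 + g_0$ with $g_0$ the fixed round metric of curvature $\Lambda+a^2$ — this is (a) and (b). From $A\equiv 0$ and $\chi^+ = K|_{\Sigma_t} + A = 0$ we get $K|_{\Sigma_t}=0$, so $K = f\,dt^2$ for some function $f$; that $f$ depends only on $t$ then follows from the constraint $J = \div(K-\tau g) = 0$ and $\mu = \Lambda+a^2$ being constant, exactly as in \cite{GalMen2018} — this is (c) and (d).

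The main obstacle I expect is the rigidity bookkeeping at the ``first nonminimal slice'': a priori the area identity $|\Sigma_t|\equiv|\Sigma|$ only holds where the foliation-with-prescribed-$\theta^+$ exists and stays outer-minimizing, so one argues on a maximal such interval and shows it is both open and closed, or equivalently runs the argument on $[0,\epsilon)$ and then uses the now-established product structure to extend. One must also be careful that $\q(\Sigma_t)$ is genuinely constant — this is where $\div E=0$ (and Stokes' theorem on the region between $\Sigma$ and $\Sigma_t$) enters, and it is the only place the new hypothesis $\div E = 0$ (beyond what Proposition~\ref{prop.inf.rigidity} needs) is used. A secondary subtlety: deducing $H_t\equiv 0$ (not merely $\theta^+=0$) requires pairing $\theta^+(\Sigma_t)\ge 0$ with the constancy of area; since $\frac{d}{dt}|\Sigma_t| = \int_{\Sigma_t}\rho\,H_t = \int_{\Sigma_t}\rho(\theta^+ - \tr_{\Sigma_t}K)$, and independently the $t$-derivative of $\int \tr_{\Sigma_t}K$ can be controlled via the equality $\mu - |J| = \Lambda + a^2$ and $J|_{\Sigma_t}=0$, one teases out $H_t=0$; this mirrors the corresponding step in \cite{GalMen2018} and \cite{GalMen2024} and should go through verbatim once the charged version of the integrated inequality above is in hand.
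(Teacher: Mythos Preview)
Your overall architecture is right (apply Proposition~\ref{prop.inf.rigidity}, use $\lambda_1=0$ to build a foliation by constant-$\theta^+$ surfaces, then squeeze using weakly outermost and outer area-minimizing), but the ``key computation'' as you state it does not go through, and in fact the two hypotheses enter in the opposite order from what you describe.

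You claim that from $\theta^+(\Sigma_t)\ge 0$ together with the charged DEC, Gauss--Bonnet, and Cauchy--Schwarz one obtains
\[
\Lambda|\Sigma_t|^2-4\pi|\Sigma_t|+(4\pi\q_0)^2\le 0,
\]
so that $|\Sigma_t|$ lies between the two roots and hence $|\Sigma_t|\le|\Sigma|$. But that quadratic inequality is equivalent to $\int_{\Sigma_t}Q\ge 0$, which is exactly what Proposition~\ref{prop.inf.rigidity} gives for a \emph{stable MOTS}. Your slices are only constant-$\theta^+$ surfaces with $\theta(t)\ge 0$; for these the evolution formula (divided by the lapse $\phi$ and integrated) yields
\[
\int_{\Sigma_t}Q \;=\; \theta'(t)\!\int_{\Sigma_t}\frac{1}{\phi}\;-\;\theta(t)\!\int_{\Sigma_t}\tau\;+\;\int_{\Sigma_t}|Y|^2\;+\;\tfrac{1}{2}\theta(t)^2|\Sigma_t|,
\]
and nothing about $\theta\ge 0$ controls the sign of $\theta'\int\phi^{-1}-\theta\int\tau$. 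So your route to $|\Sigma_t|\le|\Sigma|$ is blocked; the appeal to ``$\lambda_1(L_t)\le$ average of $Q_t$'' is not available because $\Sigma_t$ is not yet known to be a MOTS.

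The paper's argument runs the logic the other way. Outer area-minimizing gives $|\Sigma_t|\ge|\Sigma_0|$, and since in the equality case $|\Sigma_0|$ is the \emph{larger} root of $p(x)=\Lambda x^2-4\pi x+(4\pi\q_0)^2$, one gets $p(|\Sigma_t|)\ge 0$, i.e.
\[
h(t):=\Lambda|\Sigma_t|+\frac{(4\pi\q_0)^2}{|\Sigma_t|}\ \ge\ 4\pi=h(0).
\]
Feeding this into the integrated evolution identity gives
\[
\theta'(t)\!\int_{\Sigma_t}\frac{1}{\phi}\;-\;\theta(t)\!\int_{\Sigma_t}\tau\ \le\ h(0)-h(t)\ \le\ 0,
\]
whence $(\theta e^{-\int_0^t\alpha})'\le 0$ with $\alpha=\int\tau\big/\int\phi^{-1}$, so $\theta(t)\le 0$. \emph{Now} weakly outermost forces $\theta(t)\equiv 0$, and equality everywhere in the chain yields the rigidity (including $|\Sigma_t|\equiv|\Sigma_0|$, $\chi_t^+=0$, $E=a\nu_t$, etc.). The subsequent step you flag as a ``secondary subtlety'' (getting $H_t\equiv 0$ and $K=f\,dt^2$) is then handled exactly as in \cite{GalMen2018}; that part of your sketch is fine.
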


By Proposition \ref{prop.inf.rigidity}, if equality in \eqref{eq.charge2} holds, then $\Lambda|\Sigma|=2\pi$ and equality in \eqref{eq.area2} also holds.

\begin{proof}
Since $\Sigma$ is weakly outermost (in particular, stable), we may apply the Infinitesimal Rigidity (Proposition~\ref{prop.inf.rigidity}) to obtain that $\Sigma$ is topologically $S^2$ and its charge and area satisfy \eqref{eq.charge2} and \eqref{eq.area2}. Furthermore, if equality in \eqref{eq.charge2} or \eqref{eq.area2} holds, then conditions\linebreak (a)-(e) in the Infinitesimal Rigidity also hold. 
In particular, as a consequence of the fact that the principal eigenvalue $\lambda_1$ of $L$ equals zero, there exists a foliation of an outer neighborhood $U\cong[0,\delta)\times\Sigma$ of $\Sigma$ in $M$ by constant null mean curvature surfaces\linebreak $\Sigma_t\cong\{t\}\times\Sigma$ (see \cite[Lemma~2.3]{Gal2018}), with $\Sigma_0=\Sigma$, such that 
\begin{align*}
g=\phi^2dt^2+g_t\quad\mbox{on}\quad U,
\end{align*}
where $g_t$ is the induced metric on $\Sigma_t$. Recall that, on $\Sigma_t$,
\begin{align*}
\frac{d\theta}{dt}=-\Delta\phi+2\langle X,\nabla\phi\rangle+\left(Q-|X|^2+\div X-\frac{1}{2}\theta^2+\tau\theta\right)\phi,
\end{align*}
where $\theta=\theta(t)$ is the null mean curvature of $\Sigma_t$ with respect to $\nu_t=\phi^{-1}\p_t$. Therefore, dividing both sides of the above equation by $\phi$ and integrating over $\Sigma_t$, we obtain
\begin{align}
\theta'\int_{\Sigma_t}\frac{1}{\phi}-\theta\int_{\Sigma_t}\tau&=\int_{\Sigma_t}\left(\div Y-|Y|^2+Q-\frac{1}{2}\theta^2\right)\le\int_{\Sigma_t}Q\nonumber\\
&=\int_{\Sigma_t}\left(\kappa_t-(\mu+J(\nu_t))-\frac{1}{2}|\chi_t^+|^2\right)\label{eq.aux7}\\
&\le4\pi-\int_{\Sigma_t}(\Lambda+|E|^2),\nonumber
\end{align}
where $Y=X-\nabla\ln\phi$ on $\Sigma_t$, and $\kappa_t$ and $\chi_t^+$ represent the Gaussian curvature and the null second fundamental form of $\Sigma_t$, respectively. Above, we have used the Divergence and the Gauss-Bonnet Theorems and inequalities
\begin{align*}
\mu+J(\nu_t)\ge\mu-|J|\ge\Lambda+|E|^2.
\end{align*}
As in \eqref{eq.aux2}, we can see that
\begin{align}\label{eq.aux8}
\q(\Sigma_t)^2\le\frac{|\Sigma_t|}{(4\pi)^2}\int_{\Sigma_t}|E|^2.
\end{align}
Therefore, using \eqref{eq.aux8} into \eqref{eq.aux7}, we get
\begin{align*}
\theta'\int_{\Sigma_t}\frac{1}{\phi}-\theta\int_{\Sigma_t}\tau\le4\pi-\left(\Lambda|\Sigma_t|+\frac{(4\pi\q(\Sigma_t))^2}{|\Sigma_t|}\right).
\end{align*}

On the other hand, the Divergence Theorem gives that $\q(\Sigma_t)=\q(\Sigma_0)=:\q_0$ for all $t\in[0,\delta)$, since $\div E=0$. Then, defining the function
\begin{align*}
h(t)=\Lambda|\Sigma_t|+\frac{(4\pi\q_0)^2}{|\Sigma_t|},
\end{align*}
we have that $h(0)=4\pi$ (this follows from equality in \eqref{eq.aux9}) and
\begin{align*}
\theta'\int_{\Sigma_t}\frac{1}{\phi}-\theta\int_{\Sigma_t}\tau\le h(0)-h(t).
\end{align*}
Also, because $|\Sigma_0|=2\pi(1+\sqrt{1-4\Lambda\q_0})/\Lambda$ is the largest root of the polynomial
\begin{align*}
p(x)=\Lambda x^2-4\pi x+(4\pi\q_0)^2
\end{align*}
and $|\Sigma_0|\le|\Sigma_t|$, since $\Sigma_0=\Sigma$ is outer area-minimizing, we have 
\begin{align*}
0\le p(|\Sigma_t|)=\Lambda|\Sigma_t|^2-4\pi|\Sigma_t|+(4\pi\q_0)^2,
\end{align*}
that is,
\begin{align*}
h(0)=4\pi\le\Lambda|\Sigma_t|+\frac{(4\pi\q_0)^2}{|\Sigma_t|}=h(t).
\end{align*}
This gives that
\begin{align*}
\theta'\int_{\Sigma_t}\frac{1}{\phi}-\theta\int_{\Sigma_t}\tau\le h(0)-h(t)\le0.
\end{align*}
Taking $\alpha(t)=\int_{\Sigma_t}\tau/\int_{\Sigma_t}\frac{1}{\phi}$, we get 
\begin{align*}
\left(\theta(t)e^{-\int_0^t\alpha(s)ds}\right)'\le0,
\end{align*}
which implies
\begin{align*}
\theta(t)e^{-\int_0^t\alpha(s)ds}\le\theta(0)=0\quad\mbox{for all}\quad t\in[0,\delta).
\end{align*}
Since we are assuming that $\Sigma_0=\Sigma$ is weakly outermost, we obtain that $\theta(t)=0$ for all $t\in[0,\delta)$. In particular, all above inequalities must be equalities. Computations as in the proof of \cite[Theorem~3.2]{GalMen2018} give conclusions (b)-(d) of the theorem. By equality in \eqref{eq.aux8}, we get that $E=a\nu_t$ on $\Sigma_t$, for some function $a$ that depends only on $t\in[0,\delta)$. Finally, using that $\Sigma_t$ is minimal and $\div E=0$, we can see that $a$ is constant. This finishes the proof of the theorem.
\end{proof}

\subsection{The charged Nariai spacetime}
The charged Nariai spacetime $(\bar{M},\bar{g})$ is an exact solution to the (source-free) Einstein-Maxwell equations,
\begin{align*}
\left\{
\begin{array}{l}
G+\Lambda\bar{g}=8\pi T_F,\\
d F=0,\quad\div_{\bar{g}}F=0,
\end{array}
\right.
\end{align*}
where $F$ is a differential 2-form on $\bar{M}$ and $T_F$ is the electromagnetic energy-momentum tensor given by
\begin{align*}
T_F=\frac{1}{4\pi}\left(F\circ F-\frac{1}{4}|F|_{\bar{g}}^2\,\bar{g}\right),\quad (F\circ F)_{\alpha\beta}=\bar{g}^{\mu\nu}F_{\alpha\mu}F_{\beta\nu}.
\end{align*}
This is what is called an \textit{electrovacuum solution} or \textit{electrovacuum spacetime} (see Section~3 of~\cite{CruzLimaSousa} for a list of electrovacuum solutions, including the charged Nariai spacetime and the ultracold black hole system). In static coordinates,
\begin{align*}
\bar{g}=-V^2dt^2+ds^2+\rho^2d\Omega^2,\quad\bar{M}=\R\times(0,\pi/\alpha)\times S^2,
\end{align*}
where $V(s)=\sin(\alpha s)$, $d\Omega^2$ is the round metric on $S^2$ of constant Gaussian curvature one, and $\alpha>0$ and $\rho^2>0$ are suitable constants. More precisely, given parameters $m>0$, $\Lambda>0$ and $\q\in\R$, representing the mass, the cosmological constant and the electric charge, respectively, satisfying
\begin{align*}
4\Lambda\q^2<1\quad\mbox{and}\quad m^2=\frac{1}{18\Lambda}\left[1+12\Lambda\q^2+(1-4\Lambda\q^2)^{3/2}\right],
\end{align*}
$\rho^2$ is defined as the only solution to 
\begin{align*}
\rho^2(1-\Lambda\rho^2)=\q^2\quad\mbox{such that}\quad\frac{1}{2\Lambda}<\rho^2\le\frac{1}{\Lambda},
\end{align*}
that is,
\begin{align*}
\rho^2=\frac{1+\sqrt{1-4\Lambda\q^2}}{2\Lambda},\quad\mbox{and}\quad\alpha:=\sqrt{\Lambda-\frac{\q^2}{\rho^4}}.
\end{align*}
The 2-form $F$ is given by
\begin{align*}
F=-\frac{\q\sin(\alpha s)}{\rho^2}dt\wedge ds
\end{align*}
and, on a $t$-slice $M$ of $\bar{M}$, the electric field is given by
\begin{align*}
E=\dfrac{\q}{\rho^2}\p_s.
\end{align*}
Straightforward computations show that $M$ is totally geodesic in $(\bar{M},\bar{g})$, i.e. $K=0$.

Now, defining $a=\q/\rho^2$, we have
\begin{align*}
a^2=\frac{\q^2}{\rho^4}=\frac{1}{\rho^2}-\Lambda,\quad\mbox{that is,}\quad\rho^2=\frac{1}{\Lambda+a^2}.
\end{align*}
Therefore, the metric $\rho^2d\Omega^2$ equals to metric $g_0$ of constant Gaussian curvature $\kappa=\Lambda+a^2$. Thus, the induced metric $g$ on $M=(0,\pi/\alpha)\times S^2$ can be written as
\begin{align*}
g=ds^2+g_0.
\end{align*}

On the other hand, the area of $\Sigma=(S^2,g_0)=(S^2,\rho^2d\Omega^2)$ satisfies
\begin{align*}
\Lambda|\Sigma|=4\pi\Lambda\rho^2=2\pi\left(1+\sqrt{1-4\Lambda\q^2}\right).
\end{align*} 
This shows that the $t$-slices of the charged Nariai spacetime satisfy the hypotheses of Theorem~\ref{thm.local.splitting} when $4\Lambda\q^2<1$ and \eqref{eq.area2} is saturated. When $\q=0$, we have the standard Nariai spacetime.

In a similar fashion, the $t$-slices of the ultracold black hole system satisfy the hypotheses of Theorem~\ref{thm.local.splitting} when $4\Lambda\q^2=1$ and $\Lambda|\Sigma|=2\pi$.

\section{Compact-with-boundary initial data sets}\label{sec:global}

We now establish conditions under which the local splitting result can be extended to a global splitting. 

\begin{thm}\label{thm.global.splitting}
Let $(M,g,K,E)$ be a $3$-dimensional compact-with-boundary initial data set for the Einstein-Maxwell equations with vanishing magnetic field, $B=0$. Assume that $(M,g,K,E)$ satisfies the charged dominant energy condition, $\mu-|J|\ge\Lambda+|E|^2$, for some constant $\Lambda>0$, and $\div E=0$. Assume also that the boundary of $M$ can be expressed as a disjoint union $\p M=\Sigma_0\cup S$ of nonempty unions of components such that the following conditions hold:
\begin{enumerate}
\item $\theta^+\le0$ on $\Sigma_0$ with respect to the normal that points into $M$;
\item $\theta^+\ge0$ on $S$ with respect to the normal that points out of $M$;
\item $M$ satisfies the homotopy condition with respect to $\Sigma_0$;
\item the relative homology group $H_2(M,\Sigma_0)$ vanishes;
\item $\Sigma_0$ minimizes area. 
\end{enumerate}
Then $\Sigma_0$ is topologically $S^2$ and its charge and area satisfy
\begin{align}\label{eq.charge3}
4\Lambda\q(\Sigma_0)^2\le1,
\end{align}
\begin{align}\label{eq.area4}
\Lambda|\Sigma_0|\le 2\pi\left(1+\sqrt{1-4\Lambda\q(\Sigma_0)^2}\right).
\end{align}
Furthermore, if equality holds in one of the above inequalities, then $M\cong[0,\ell]\times\Sigma_0$, for some $\ell>0$, and
\begin{enumerate}
\item[\rm (a)] $E=a\nu_t$, for some constant $a$, where $\nu_t$ is the unit normal to $\Sigma_t\cong\{t\}\times\Sigma_0$ in direction of the foliation;
\item[\rm (b)] $(M,g)$ is isometric to $([0,\ell]\times\Sigma_0,dt^2+g_0)$, where $g_0$ - the induced metric on $\Sigma_0$ - has constant Gaussian curvature $\kappa=\Lambda+a^2$;
\item[\rm (c)] $K=fdt^2$ on $M$, where $f\in C^\infty(M)$ depends only on $t\in[0,\ell]$;
\item[\rm (d)] $\mu=\Lambda+a^2$ and $J=0$ on $M$.
\end{enumerate}
If equality in \eqref{eq.charge3} holds, then $\Lambda|\Sigma_0|=2\pi$ and equality in \eqref{eq.area4} also holds.
\end{thm}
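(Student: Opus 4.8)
The plan is to reduce the global statement to the local splitting theorem of Section~\ref{sec:local} and then propagate the splitting across all of $M$ using a connectedness argument, exactly in the spirit of the passage from Theorem~\ref{thm.GalMen2018} to Theorem~\ref{thm.GalMen2024}. First I would observe that conditions (1)--(5) are precisely the hypotheses that guarantee, via the standard existence/regularity theory for MOTS (Schoen--Yau type minimization combined with the homotopy and vanishing-relative-homology conditions, as in \cite{GalMen2024}), that $\Sigma_0$ is a \emph{weakly outermost} MOTS which is \emph{outer area-minimizing} in $M$: area-minimization in the homology class gives $\theta^+=0$ on $\Sigma_0$ (the boundary inequality $\theta^+\le 0$ plus minimization forces equality, else one could push in and decrease area while keeping $\theta^+\le0$), and the absence of a hypersurface with $\theta^+<0$ homologous to $\Sigma_0$ in the region it bounds follows from (1)--(4). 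Once $\Sigma_0$ is known to be a closed weakly outermost, outer area-minimizing MOTS, Proposition~\ref{prop.inf.rigidity} and Theorem~\ref{thm.local.splitting} apply verbatim, giving \eqref{eq.charge3}, \eqref{eq.area4}, and — in the equality case — an outer neighborhood $U\cong[0,\delta)\times\Sigma_0$ on which conclusions (a)--(d) hold.

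The second step is the global continuation. Let $\delta_{\max}$ be the supremum of all $\delta$ for which such a product splitting $([0,\delta)\times\Sigma_0,\,dt^2+g_0)$ with $E=a\nu_t$, $K=f\,dt^2$, $\mu=\Lambda+a^2$, $J=0$ extends into $M$. On $[0,\delta_{\max})$ the leaves $\Sigma_t$ are all isometric to $(\Sigma_0,g_0)$, hence all are MOTS (in fact totally geodesic in $(M,g)$ with $\chi^+=0$) with the same area and the same charge $\q_0$ (using $\div E=0$ and the Divergence Theorem), so each $\Sigma_t$ again saturates \eqref{eq.area4}; moreover each $\Sigma_t$ is homologous to $\Sigma_0$ and still outer area-minimizing and weakly outermost in the portion of $M$ outside it. Compactness of $M$ and the fact that the metric is a product up to $\delta_{\max}$ show the leaves $\Sigma_t$ converge as $t\to\delta_{\max}$ to a limit hypersurface $\Sigma_{\delta_{\max}}$, which is either an interior MOTS — in which case Theorem~\ref{thm.local.splitting} (applied at $\Sigma_{\delta_{\max}}$) lets us extend the foliation past $\delta_{\max}$, contradicting maximality — or a component of the other boundary piece $S$. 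Since $S$ has $\theta^+\ge 0$ pointing out of $M$ and the limit leaf has $\theta^+=0$, the foliation must terminate exactly on $S$; one then sets $\ell=\delta_{\max}$ and concludes $M\cong[0,\ell]\times\Sigma_0$ with $g=dt^2+g_0$, $K=f\,dt^2$, $E=a\nu_t$, $\mu=\Lambda+a^2$, $J=0$ throughout, and with $f$ depending only on $t$ by the same computation as in Theorem~\ref{thm.GalMen2018}; constancy of $a$ follows from minimality of the leaves together with $\div E=0$, as in the proof of Theorem~\ref{thm.local.splitting}. The final sentence, that equality in \eqref{eq.charge3} forces $\Lambda|\Sigma_0|=2\pi$ and equality in \eqref{eq.area4}, is immediate from the corresponding remark in Proposition~\ref{prop.inf.rigidity}.

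The main obstacle I anticipate is the continuation/limit step: one must rule out the foliation degenerating in the interior (e.g. leaves running off to infinite $t$ without reaching $S$, or the conformal factor $\phi$ in $g=\phi^2 dt^2+g_t$ collapsing). This is handled by noting that on the product region $\phi$ can be normalized, the second fundamental forms of the leaves vanish, and all geometric quantities are controlled (leaves are isometric copies of a fixed round sphere), so standard ODE/elliptic continuation for the MOTS foliation (Lemma~2.3 of \cite{Gal2018}, applied iteratively) together with the compactness of $M$ forces the foliation to sweep out $M$ and meet $S$. A secondary technical point is verifying carefully that each interior leaf $\Sigma_t$ inherits the \emph{weakly outermost} property needed to re-invoke Theorem~\ref{thm.local.splitting}; this follows because any hypersurface with $\theta^+<0$ homologous to $\Sigma_t$ in the region outside it would, together with $\Sigma_0\cong\Sigma_t$ and the homology/homotopy hypotheses, contradict the weakly outermost property of $\Sigma_0$ established in the first step.
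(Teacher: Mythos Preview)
Your continuation argument in the second step is essentially the paper's, and it is fine. The genuine gap is in your first step: you assert that ``area-minimization in the homology class gives $\theta^+=0$ on $\Sigma_0$ (the boundary inequality $\theta^+\le 0$ plus minimization forces equality, else one could push in and decrease area while keeping $\theta^+\le0$)''. This conflates $\theta^+$ with the mean curvature $H$. The first variation of area is governed by $H$, while $\theta^+=\tr_\Sigma K+H$; there is no sign assumption on $\tr_\Sigma K$, so $\theta^+<0$ somewhere does not let you decrease area by pushing in. Likewise, your claim that weak outermostness ``follows from (1)--(4)'' is unsupported: a hypersurface with $\theta^+<0$ homologous to $\Sigma_0$ is not obviously obstructed by those topological conditions alone. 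In short, under hypotheses (1)--(5) it is \emph{not} automatic that $\Sigma_0$ is a MOTS at all, let alone weakly outermost; this is exactly what the paper isolates as Lemma~\ref{main.lemma}.

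The paper's route to \eqref{eq.charge3}--\eqref{eq.area4} and to the weakly outermost property is quite different from what you sketch. One argues by contradiction: if $\Sigma_0$ were not a MOTS (or not weakly outermost), a small perturbation yields a strict barrier $\theta^+_K<0$; one then passes to the sign-reversed data $(W,g,-K)$ on the region bounded by this barrier and $S$, and invokes the MOTS \emph{existence} theorem of Andersson--Metzger/Eichmair to produce a connected outermost MOTS $\Sigma'$ homologous to $\Sigma_0$. Infinitesimal Rigidity bounds $\Lambda|\Sigma'|$ from above, while $|\Sigma_0|\le|\Sigma'|$ and $\q(\Sigma')=\q(\Sigma_0)$ (from $\div E=0$) force equality; the Local Splitting Theorem then foliates an outer neighborhood of $\Sigma'$ by MOTS, contradicting outermostness of $\Sigma'$. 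The same mechanism yields \eqref{eq.charge3} (assuming $4\Lambda\q(\Sigma_0)^2>1$ produces a stable MOTS $\Sigma'$ with $4\Lambda\q(\Sigma')^2>1$, contradicting Proposition~\ref{prop.inf.rigidity}). None of this is ``Schoen--Yau type minimization''; the key external input is the MOTS existence theorem, and the contradiction requires both the infinitesimal and local rigidity results already in hand.
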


As in Theorem \ref{thm.GalMen2024} in the introduction, the weakly outermost assumption is not required. We do, however, impose the same topological assumptions (3) and (4). By definition, $M$ satisfies the \textit{homotopy condition} with respect to $\Sigma\subset M$ provided there exists a continuous map $\rho:M\to\Sigma$ such that $\rho\circ i:\Sigma\to\Sigma$ is homotopic to $\id_\Sigma$, where $i:\Sigma\hookrightarrow M$ is the inclusion map (for example, if $\Sigma$ is a retract of $M$).

The following is the key lemma that enables us to apply the local splitting result.

\begin{lemma}\label{main.lemma}
Under the assumptions of Theorem \ref{thm.global.splitting}, we have that
\begin{enumerate}
\item[(a)] $4\Lambda\q(\Sigma_0)^2\le1$ and
\item[(b)] $\Sigma_0$ is a weakly outermost MOTS whose area satisfies
\begin{align}\label{eq.equality.area}
\Lambda|\Sigma_0|=2\pi\left(1+\sqrt{1-4\Lambda\q(\Sigma_0)^2}\right)
\end{align}
unless
\begin{align*}
\Lambda|\Sigma_0|<2\pi\left(1+\sqrt{1-4\Lambda\q(\Sigma_0)^2}\right).
\end{align*}
Furthermore, the above inequality cannot happen if $4\Lambda\q(\Sigma_0)^2=1$.
\end{enumerate}
\end{lemma}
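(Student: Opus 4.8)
The plan is to manufacture, from the barrier conditions, an auxiliary weakly outermost MOTS to which the Infinitesimal Rigidity (Proposition~\ref{prop.inf.rigidity}) applies, and then to play it off against $\Sigma_0$ using that $\Sigma_0$ minimizes area and that the charge is constant within a homology class because $\div E=0$. By hypotheses~(1) and~(2), $\Sigma_0$ is a weakly outer trapped barrier on the inside and $S$ a weakly outer untrapped barrier on the outside, so the existence theory for outermost MOTS (used in the same way in the proof of \cite[Theorem~3.1]{GalMen2024}) yields an outermost --- in particular weakly outermost, hence stable --- MOTS $\hat\Sigma\subset M$ that cobounds a region $W$ with $\Sigma_0$ (with $W=\varnothing$ precisely when $\hat\Sigma=\Sigma_0$) and a region $\hat M$ with $S$. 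The topological hypotheses~(3) and~(4) enter here, exactly as in \cite{GalMen2024}, to guarantee that $\hat\Sigma$ is homologous to $\Sigma_0$ (and, arguing as there, that we may take it connected). Since $\div E=0$, the Divergence Theorem then gives $\q(\hat\Sigma)=\q(\Sigma_0)=:\q_0$, with consistently chosen normals.

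Applying Proposition~\ref{prop.inf.rigidity} to $\hat\Sigma$ gives $4\Lambda\q_0^2\le1$, which is assertion~(a), together with $\Lambda|\hat\Sigma|\le 2\pi(1+\sqrt{1-4\Lambda\q_0^2})$. Because $\Sigma_0$ minimizes area and $\hat\Sigma$ is homologous to it, $|\Sigma_0|\le|\hat\Sigma|$, so $\Lambda|\Sigma_0|\le\Lambda|\hat\Sigma|\le 2\pi(1+\sqrt{1-4\Lambda\q_0^2})$. If this inequality is strict we are in the ``unless'' alternative of~(b) and there is nothing more to prove. Otherwise equality holds throughout, forcing $|\Sigma_0|=|\hat\Sigma|$ and forcing $\hat\Sigma$ to saturate the upper bound in \eqref{eq.area}; hence by Proposition~\ref{prop.inf.rigidity} the rigid conclusions~(a)--(e) hold for $\hat\Sigma$, and in particular $\chi^+_{\hat\Sigma}=0$ and the principal eigenvalue $\lambda_1(L_{\hat\Sigma})$ vanishes.

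It remains, in this equality situation, to upgrade ``$\hat\Sigma$'' to ``$\Sigma_0$'', i.e.\ to show $\Sigma_0$ is itself a weakly outermost MOTS; if $\hat\Sigma=\Sigma_0$ this is immediate, so suppose $W\ne\varnothing$. Using $\lambda_1(L_{\hat\Sigma})=0$ and \cite[Lemma~2.3]{Gal2018}, foliate a one-sided neighborhood of $\hat\Sigma$ inside $W$ by surfaces $\Sigma_t$ of constant null expansion $\theta(t)$, keeping the normal $\nu_t$ pointing toward $S$, and rerun the estimate from the proof of Theorem~\ref{thm.local.splitting} for $\theta'\int_{\Sigma_t}\phi^{-1}-\theta\int_{\Sigma_t}\tau$: the charge $\q(\Sigma_t)=\q_0$ is constant (because $\div E=0$), $\hat\Sigma$ saturates, and every leaf satisfies $|\Sigma_t|\ge|\Sigma_0|=|\hat\Sigma|$ because $\Sigma_0$ minimizes area, which together give $h(t):=\Lambda|\Sigma_t|+(4\pi\q_0)^2/|\Sigma_t|\ge 4\pi=h(0)$, so that $(\theta\,e^{-\int_0^t\alpha})'\le0$ along the foliation. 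A continuation argument in the spirit of \cite{Gal2018,GalMen2024} --- the equality case of the estimate forcing each leaf to be a round sphere of the fixed area $|\hat\Sigma|$ with $\chi^+=0$, so the foliation cannot degenerate --- then sweeps out all of $W$, with $\Sigma_0$ appearing as a leaf; as in the proof of \cite[Theorem~3.2]{GalMen2018}, $W$ is isometric to a product $[0,\ell']\times\hat\Sigma$ with $g=dt^2+g_0$ and $K=f\,dt^2$, and in particular $\Sigma_0$ is a MOTS with $\Lambda|\Sigma_0|=2\pi(1+\sqrt{1-4\Lambda\q_0^2})$, i.e.\ \eqref{eq.equality.area}. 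Finally $\Sigma_0$ is weakly outermost: any hypersurface in $M$ homologous to and different from $\Sigma_0$ with $\theta^+<0$ must, by the outermost property of $\hat\Sigma$ (as the outer boundary of the trapped region), lie in $\overline W$, and such a surface is excluded by the MOTS maximum principle applied to the product foliation of $W$, whose leaves all have $\theta^+=0$.

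For the last assertion, if $4\Lambda\q_0^2=1$ then equality holds in \eqref{eq.charge} for $\hat\Sigma$, so by the observation following Proposition~\ref{prop.inf.rigidity} we have $\Lambda|\hat\Sigma|=2\pi$ and the rigid conclusions for $\hat\Sigma$ hold automatically; moreover in this borderline case $h(x)=\Lambda x+(4\pi\q_0)^2/x$ attains its minimum value $4\pi$ precisely at $x=2\pi/\Lambda=|\hat\Sigma|$, so the inequality $h(t)\ge4\pi$ used above holds for free and the argument of the preceding paragraph runs unconditionally, yielding $\Lambda|\Sigma_0|=\Lambda|\hat\Sigma|=2\pi=2\pi(1+\sqrt{1-4\Lambda\q_0^2})$; thus the strict alternative in~(b) cannot occur. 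I expect the main obstacle to be exactly the continuation step in the third paragraph --- extending the local foliation of \cite[Lemma~2.3]{Gal2018} across the entire collar $W$ with $\Sigma_0$ as its boundary leaf (this is where the area-minimality of $\Sigma_0$ is used as a barrier and where the topological hypotheses re-enter), and then passing from ``$\Sigma_0$ is a MOTS'' to ``$\Sigma_0$ is weakly outermost''. Handling connectedness of $\hat\Sigma$, and the degenerate possibility that $\hat\Sigma$ meets $S$, are further technical points to be treated as in \cite{GalMen2024}.
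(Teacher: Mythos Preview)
Your route diverges from the paper's at the crucial equality step, and the divergence creates a real gap.

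The paper does \emph{not} produce an outermost MOTS in $(M,g,K)$ and then try to foliate back toward $\Sigma_0$. Instead it argues by contradiction: assuming $\Sigma_0$ is not a MOTS (resp.\ not weakly outermost), it perturbs to obtain a strict barrier $\Sigma$ with $\theta_K^+<0$, then \emph{flips} to the initial data $(W,g,-K)$ on the region between $\Sigma$ and $S$. In that flipped data the roles reverse: $S$ is the inner barrier and $\Sigma$ the outer one, so the outermost MOTS $\Sigma'$ furnished by the existence theorem is outermost \emph{toward} $\Sigma_0$. Equality of areas and charges then lets one invoke the Local Splitting Theorem on $\Sigma'$, and the resulting foliation by MOTS lies on the $\Sigma_0$-side of $\Sigma'$, immediately contradicting outermostness. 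No inward continuation from an auxiliary MOTS to $\Sigma_0$ is ever needed.

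Your argument, by contrast, tries to foliate from $\hat\Sigma$ \emph{into} $W$ while keeping $\nu_t$ pointing toward $S$. With that orientation the variation vector along the foliation is $-\phi\nu_t$, and the estimate of Theorem~\ref{thm.local.splitting} flips sign: one obtains $\theta'\!\int\phi^{-1}+\theta\!\int\tau\ge h(t)-4\pi\ge0$, hence $(\theta\,e^{\int_0^t\alpha})'\ge0$ and $\theta(t)\ge0$ on the $W$-side, not $(\theta\,e^{-\int\alpha})'\le0$ as you wrote. More importantly, there is no ``weakly outermost'' hypothesis on the $\Sigma_0$-side of $\hat\Sigma$ to force $\theta(t)=0$; the mechanism that drives the Local Splitting proof is absent. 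Without $\theta\equiv0$ you do not get the product structure leaf by leaf, so you cannot iterate \cite[Lemma~2.3]{Gal2018} across all of $W$, and you have no way to reach $\Sigma_0$ and conclude it is a MOTS. You correctly flag this continuation step as the main obstacle, but the proposal offers no device to overcome it; the paper's $K\mapsto -K$ reversal is exactly such a device.
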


\begin{proof}
Assume, for the moment, that (a) is true. Suppose 
\begin{align*}
\Lambda|\Sigma_0|\ge2\pi\left(1+\sqrt{1-4\Lambda\q(\Sigma_0)^2}\right).
\end{align*}
We are going to prove that, in this case, $\Sigma_0$ is a weakly outermost MOTS whose area satisfies \eqref{eq.equality.area}. Suppose, by contradiction, that $\Sigma_0$ is not a MOTS, \textit{i.e.} $\theta_K^+\le0$ and $\theta_K^+\not\equiv0$ on $\Sigma_0$. Therefore, it follows from \cite[Lemma~5.2]{AndMet} that a small perturbation $\Sigma$ of $\Sigma_0$ in $M$ is such that $\theta_K^+<0$ on $\Sigma$; in particular, $\Sigma$ is homologous to $\Sigma_0$ in $M$.

Now denote by $W$ the compact region bounded by $\Sigma$ and $S$ in $M$ and consider the initial data set $(W,g,-K)$. It follows that the null expansion scalars of $\Sigma$ and $S$ in $(W,g,-K)$ satisfy
\begin{itemize}
\item $\theta_{-K}^+\le0$ on $S$ with respect to the normal that points into $W$;
\item $\theta_{-K}^+>0$ on $\Sigma$ with respect to the normal that points out of $W$.
\end{itemize}
Therefore, by the MOTS Existence Theorem due to L.~Andersson and J.~Metzger \cite{AndMet} in dimension $n=3$ and M.~Eichmair~\cite{Eic1,Eic2} in dimensions $3\le n\le7$ (see \cite[Theorem~2.3]{EicGalMen} for the version that we are using here), there exists an outermost MOTS $\Sigma'$ in $(W,g,-K)$ that is homologous to $\Sigma$ in $W$; in particular, $\Sigma'$ is homologous to $\Sigma_0$ in $M$. Without loss of generality, we may assume that $\Sigma'$ has only homologically nontrivial components.

Now  observe that $\Sigma_0$ is connected, since $M$ is connected and it satisfies the homotopy condition with respect to $\Sigma_0$. Because $H_2(M,\Sigma_0)=0$ and $\Sigma'$ is homologous to $\Sigma_0$, we have that $\Sigma'$ is also connected. Thus, by the Infinitesimal Rigidity (Proposition~\ref{prop.inf.rigidity}), 
\begin{align*}
\Lambda|\Sigma'|\le2\pi\left(1+\sqrt{1-4\Lambda\q(\Sigma')^2}\right).
\end{align*}

On the other hand, $|\Sigma_0|\le|\Sigma'|$, since $\Sigma_0$ minimizes area. Also, because $\div E=0$, the Divergence Theorem gives that $\q(\Sigma')=\q(\Sigma_0)=:\q_0$. Therefore,
\begin{align*}
2\pi\left(1+\sqrt{1-4\Lambda\q_0^2}\right)\le\Lambda|\Sigma_0|\le\Lambda|\Sigma'|\le2\pi\left(1+\sqrt{1-4\Lambda\q_0^2}\right),
\end{align*}
that is,
\begin{align*}
\Lambda|\Sigma'|=\Lambda|\Sigma_0|=2\pi\left(1+\sqrt{1-4\Lambda\q_0^2}\right)
\end{align*}
and the Local Splitting Theorem (Theorem~\ref{thm.local.splitting}) implies that an outer neighborhood of $\Sigma'$ in $(W,g,-K)$ is foliated by MOTS (here we have used that $\Sigma'$ minimizes area, since $|\Sigma'|=|\Sigma_0|$ and $\Sigma_0$ minimizes area), which contradicts the fact that $\Sigma'$ is outermost. This proves that $\Sigma_0$ is a MOTS.

Now we are going to prove that $\Sigma_0$ is weakly outermost. Suppose, by contradiction, that $\Sigma_0$ is not weakly outermost, that is, suppose that there exists a surface $\Sigma$, homologous to $\Sigma_0$ in $M$, such that $\theta_K^+<0$ on $\Sigma$. Without loss of generality, we may assume that each component of $\Sigma$ is homologically nontrivial. Denote by $W$ the compact region bounded by $\Sigma$ and $S$ in $M$ and consider the initial data set $(W,g,-K)$ as before. Then, repeating exactly the same arguments as above, we have a contradiction. Thus $\Sigma_0$ is a weakly outermost MOTS. To get that \eqref{eq.equality.area} holds and finish the proof of the first part of (b), we may apply the Infinitesimal Rigidity to obtain that
\begin{align*}
\Lambda|\Sigma_0|\le2\pi\left(1+\sqrt{1-4\Lambda\q(\Sigma_0)^2}\right),
\end{align*}
since $\Sigma_0$ is weakly outermost and, in particular, stable.

Now, to finish the proof of (b), we are going to prove that, if $4\Lambda\q(\Sigma_0)^2=1$, then $\Lambda|\Sigma_0|=2\pi$. We claim that, in this case, $\Sigma_0$ is a weakly outermost (in particular, stable) MOTS. Therefore, by the Infinitesimal Rigidity, we have $\Lambda|\Sigma_0|=2\pi$. In fact, suppose that $\Sigma_0$ is not a MOTS. Then, the same arguments as before can be applied to ensure the existence of a connected outermost MOTS $\Sigma'$ in $(W,g,-K)$ that is homologous to $\Sigma_0$ in $M$. Then, since $\div E=0$, we have $\q(\Sigma')=\q(\Sigma_0)$, and thus $4\Lambda\q(\Sigma')^2=1$. Therefore, by the Local Splitting Theorem, an outer neighborhood of $\Sigma'$ in $(W,g,-K)$ is foliated by MOTS, which is a contradiction. Analogously we can prove that $\Sigma_0$ is weakly outermost. This finishes the proof of (b).

Now let us prove (a). Assume, by contradiction, that $4\Lambda\q(\Sigma_0)^2>1$. The above arguments give that $\Sigma_0$ is a weakly outermost MOTS as, otherwise, there exists a connected outermost (in particular, stable) MOTS $\Sigma'$ in $(W,g,-K)$ that is homologous to $\Sigma_0$ in $M$. Then, by the Infinitesimal Rigidity,
\begin{align*}
1\ge4\Lambda\q(\Sigma')^2=4\Lambda\q(\Sigma_0)^2>1,
\end{align*} 
which is a contradiction. Thus, if $4\Lambda\q(\Sigma_0)^2>1$, then $\Sigma_0$ is a weakly outermost MOTS, which also contradicts \eqref{eq.charge} in the Infinitesimal Rigidity. This finishes the proof of (a).
\end{proof}

\begin{proof}[Proof of Theorem~\ref{thm.global.splitting}]
First, we observe that $\Sigma_0$ is topologically $S^2$. In fact, if that is not the case, then $\Sigma_0$ is homeomorphic to a torus or to a connected sum of tori. As such, $\Sigma_0$ satisfies what is called the {\it cohomology condition} in \cite{EicGalMen}. The rigidity result Theorem~1.2 in \cite{EicGalMen} then applies so that $0=\mu-|J|\ge\Lambda+|E|^2$, which is a contradiction. Hence $\Sigma_0$ is topologically $S^2$.

On the other hand, Lemma~\ref{main.lemma} gives inequalities \eqref{eq.charge3} and \eqref{eq.area4}; furthermore, if equality holds in \eqref{eq.charge3} or \eqref{eq.area4}, then $\Sigma_0$ is a weakly outermost MOTS whose area satisfies 
\begin{align*}
\Lambda|\Sigma_0|=2\pi\left(1+\sqrt{1-4\Lambda\q_0^2}\right),
\end{align*}
where $\q_0:=\q(\Sigma_0)$. Therefore, we may apply the Local Splitting Theorem to obtain an outer neighborhood $U\cong[0,\delta)\times\Sigma_0$ of $\Sigma_0$ in $M$ such that conclusions (a)-(d) of the theorem hold on $U$. Clearly, $\Sigma_t\cong\{t\}\times\Sigma_0$ converges to a closed embedded MOTS $\Sigma_\delta$ as $t\nearrow\delta$. If $\Sigma_\delta\cap S\neq\varnothing$, the Strong Maximum Principle implies that $\Sigma_\delta=S$. Otherwise, if $\Sigma_\delta\cap S=\varnothing$, we replace $\Sigma_0$ by $\Sigma_\delta$ and $M$ by the complement of $U$ and run the process again (this can be done since $\q(\Sigma_\delta)=\q_0$ and $|\Sigma_\delta|=|\Sigma_0|$). The theorem follows by a continuity argument.
\end{proof}

\bibliographystyle{amsplain}
\bibliography{bibliography.bib}

\end{document}